\numberwithin{equation}{section}
\numberwithin{figure}{section}
\theoremstyle{plain}
\newtheorem{thm}{\protect\theoremname}
  \theoremstyle{plain}
  \newtheorem{lem}[thm]{\protect\lemmaname}
  \theoremstyle{plain}
  \newtheorem{prop}[thm]{\protect\propositionname}
  \providecommand{\lemmaname}{Lemma}
  \providecommand{\propositionname}{Proposition}
\providecommand{\theoremname}{Theorem}
  \providecommand{\lemmaname}{Lemma}
  \providecommand{\propositionname}{Proposition}
\providecommand{\theoremname}{Theorem}
\begin{document}

\title{Remarks on factoriality and $q$-deformations}

\author{Adam Skalski}
\address{Institute of Mathematics of the Polish Academy of Sciences,
	ul.~\'Sniadeckich 8, 00--656 Warszawa, Poland}
\email{a.skalski@impan.pl}

\author{Simeng Wang}

\address{Laboratoire de Mathématiques, Université de Franche-Comté, 25030
	Besançon Cedex, France and Institute of Mathematics of the Polish Academy of Sciences,
	ul.~\'Sniadeckich 8, 00--956 Warszawa, Poland}

\email{simeng.wang@univ-fcomte.fr}
\begin{abstract}
We prove that the mixed $q$-Gaussian algebra $\Gamma_{Q}(H_{\mathbb{R}})$
associated to a real Hilbert space $H_{\mathbb{R}}$ and a real symmetric
matrix $Q=(q_{ij})$ with $\sup|q_{ij}|<1$, is a factor as soon as
$\dim H_{\mathbb{R}}\geq2$. We also discuss the factoriality of $q$-deformed
Araki-Woods algebras, in particular showing that the $q$-deformed
Araki-Woods algebra $\Gamma_{q}(H_{\mathbb{R}},U_{t})$ given by a
real Hilbert space $H_{\mathbb{R}}$ and a strongly continuous group $U_{t}$ is a factor
when $\dim H_{\mathbb{R}}\geq2$ and $U_{t}$ admits an invariant
eigenvector.
\end{abstract}

\maketitle

\section{Introduction}

This paper studies the factoriality of some $q$-deformed von Neumann
algebras. In the early 90's, motivated by mathematical physics, Bo\.{z}ejko
and Speicher introduced the von Neumann algebra $\Gamma_{q}(H_{\mathbb{R}})$
generated by $q$-Gaussian variables \cite{bozejkospeicher91q}. Since
then, the von Neumann algebra $\Gamma_{q}(H_{\mathbb{R}})$ has been widely
studied, and also its several generalizations have been introduced and fruitfully investigated.
In particular, there are two interesting types of $q$-deformed algebras
which generalize that of Bo\.{z}ejko and Speicher: the first one is
the mixed $q$-Gaussian algebra introduced in \cite{bozejkospeicher94yangbaxter},
and the second one is the family of $q$-deformed Araki-Woods algebras constructed
in \cite{hiai03qaw}.

The question of factoriality of these $q$-deformed Neumann algebras remained
a well-known problem in the field for many years. In 2005, Ricard
\cite{ricard05qfactor} proved that the von Neumann algebra $\Gamma_{q}(H_{\mathbb{R}})$
is a factor as soon as $\dim H_{\mathbb{R}}\geq2$, which solved the
problem for $\Gamma_{q}(H_{\mathbb{R}})$ in full generality (for earlier partial results see
also \cite{sniady04qfactor}, \cite{krolak06factoriality}, \cite{bozejkokummerspeicher97q}). However,
the analogous problem for mixed $q$-Gaussian algebras and $q$-deformed
Araki-Woods algebras has remained open. 
Among the known results, the factoriality of mixed $q$-Gaussian algebras was proved by Kr\'olak \cite{krolak00wickyangbaxter} when the underlying
Hilbert space is infinite-dimensional, and very recently by Nelson and Zeng \cite{nelsonzeng16mixedq}
when the size of the deformation parameters is sufficiently small; similarly, the factoriality of
$q$-deformed Araki-Woods algebras was only established by Hiai in \cite{hiai03qaw}
when the `almost periodic part' (see Section 4 for an explanation of this term) of the underlying Hilbert space is infinite-dimensional, and by Nelson in
\cite{nelson15freenontracialtransport} when $q$ is small.

In this note we solve the problem of factoriality for mixed $q$-Gaussian
algebras in full generality, following the ideas of \cite{ricard05qfactor}. Our methods apply also to the $q$-deformed Araki-Woods
algebras, and we show that the $q$-deformed Araki-Woods algebra
$\Gamma_{q}(H_{\mathbb{R}},U_{t})$ is a factor as soon as $\dim H_{\mathbb{R}}\geq2$
and the semigroup $U_{t}$ admits an invariant eigenvector. 
We remark that after the completion of this work, we learned that the
last result mentioned above was also obtained
independently by Bikram and Mukherjee in \cite{bikrammukherjee16qawfactor}, as a part of a detailed study of maximal abelian subalgebras in $q$-deformed Araki-Woods algebras.

The scalar products below are always linear on the left. The plan of the paper is as follows: in  Section 2 we present a Hilbert space lemma providing estimates for certain commutators to be used later, in Section 3 we establish the factoriality of mixed $q$-Gaussian algebras in full generality, and in Section 4 discuss several results concerning factoriality in the context of $q$-Araki-Woods von Neumann algebras.

\section{A convergence lemma for $q$-commutation relations}

The following purely Hilbert-space-theoretic lemma will play a key role in our discussions of factoriality in the following sections.
\begin{lem}
\label{lem:conv general case}Let $(H_{n})_{n\geq1}$ be a sequence
of Hilbert spaces and write $H=\oplus_{n\geq1}H_{n}$. Let $r,s \in \mathbb{N}$ and let  $(a_{i})_{1\leq i\leq r}$, $(b_{j})_{1\leq j\leq s}$
be two families of operators on $H$ which send each $H_{n}$ into
$H_{n+1}$ or $H_{n-1}$, such that there exists $0<q<1$ with
\[
\|(a_{i}b_{j}-b_{j}a_{i})|_{H_{n}}\|\leq q^{n}, \;\;\; n \in \mathbb{N}.
\]
Assume that $K_{n}\subset H_{n}$ is a finite-dimensional Hilbert
subspace for each $n\geq1$ such that for $K=\oplus_{n}K_{n}$ we have
\[
a_{i}(K)\subset K,\quad1\leq i\leq r-1,\quad\text{and }a_{r}|_{K}=0.
\]
 Then for any bounded nets $(\xi_{\alpha}),(\eta_{\alpha})\subset K$
such that $\eta_{\alpha}\to0$ weakly, we have
\[
\langle a_{1}^{*}\cdots a_{r}^{*}\xi_{\alpha},b_{1}\cdots b_{s}\eta_{\alpha}\rangle\to0.
\]
\end{lem}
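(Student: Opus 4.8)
The plan is to exploit the defining condition $a_{r}|_{K}=0$ after transporting all the operators $a_{i}$ so that they act directly on $\eta_{\alpha}$. First I would use the adjoint relation to rewrite
\[
\langle a_{1}^{*}\cdots a_{r}^{*}\xi_{\alpha},\,b_{1}\cdots b_{s}\eta_{\alpha}\rangle=\langle\xi_{\alpha},\,a_{r}\cdots a_{1}\,b_{1}\cdots b_{s}\,\eta_{\alpha}\rangle.
\]
The point of this form is that $a_{r}\cdots a_{1}\eta_{\alpha}=0$: since $a_{i}(K)\subset K$ for $i\leq r-1$ and $\eta_{\alpha}\in K$, the vector $a_{r-1}\cdots a_{1}\eta_{\alpha}$ still lies in $K$, and then $a_{r}|_{K}=0$ annihilates it. Hence, if the $a_{i}$ and $b_{j}$ commuted exactly, the expression would vanish identically; the real content of the lemma is that the approximate commutation $\|(a_{i}b_{j}-b_{j}a_{i})|_{H_{n}}\|\leq q^{n}$ suffices to push this vanishing through in the limit.

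Accordingly, I would commute the block $b_{1}\cdots b_{s}$ to the left of $a_{r}\cdots a_{1}$ by repeated use of $a_{i}b_{j}=b_{j}a_{i}+c_{ij}$ with $c_{ij}:=a_{i}b_{j}-b_{j}a_{i}$, obtaining an operator identity
\[
a_{r}\cdots a_{1}\,b_{1}\cdots b_{s}=b_{1}\cdots b_{s}\,a_{r}\cdots a_{1}+E,
\]
where $E$ is a finite sum of \emph{error} monomials, each a product of some of the $a_{i},b_{j}$ together with at least one commutator factor $c_{ij}$. The leading term contributes $\langle\xi_{\alpha},b_{1}\cdots b_{s}\,a_{r}\cdots a_{1}\eta_{\alpha}\rangle=0$ by the observation above, so the whole problem reduces to showing $\langle\xi_{\alpha},E\eta_{\alpha}\rangle\to 0$.

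To estimate a single error monomial $T$ (one summand of $E$) applied to $\eta_{\alpha}$, I would split $\eta_{\alpha}=\eta_{\alpha}^{\leq N}+\eta_{\alpha}^{>N}$ along the grading $K=\oplus_{n}K_{n}$, where $\eta_{\alpha}^{\leq N}$ collects the components of degree at most $N$. For the high-degree part I track the degree at which the commutator factor in $T$ acts: since each operator shifts the grading by a bounded amount, a commutator inside $T$ acts on $H_{n'}$ with $n'\geq n-(r+s)$ when $T$ is applied to the degree-$n$ component $\eta_{\alpha,n}$, so that $\|c_{ij}|_{H_{n'}}\|\leq q^{n'}\leq q^{-(r+s)}q^{n}$; bounding the remaining (bounded) factors by a fixed constant and summing via Cauchy--Schwarz gives
\[
\|T\eta_{\alpha}^{>N}\|\leq C_{0}\Big(\sum_{n>N}q^{2n}\Big)^{1/2}\Big(\sum_{n>N}\|\eta_{\alpha,n}\|^{2}\Big)^{1/2}\leq C_{0}\,\frac{q^{N+1}}{\sqrt{1-q^{2}}}\,\sup_{\alpha}\|\eta_{\alpha}\|,
\]
which tends to $0$ as $N\to\infty$, uniformly in $\alpha$. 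For the low-degree part I use that $\oplus_{n\leq N}K_{n}$ is finite dimensional, so the weak convergence $\eta_{\alpha}\to0$ forces $\eta_{\alpha}^{\leq N}\to0$ in norm for each fixed $N$, whence $\|T\eta_{\alpha}^{\leq N}\|\to0$ as $\alpha\to\infty$ by boundedness of $T$. Pairing with the bounded net $\xi_{\alpha}$, combining the two estimates by an $\varepsilon/2$ argument (first choose $N$ large, then $\alpha$ large), and summing over the finitely many monomials in $E$ then yields $\langle\xi_{\alpha},E\eta_{\alpha}\rangle\to0$.

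The main obstacle is the uniform-in-$\alpha$ control of the high-degree tail: one must verify that the degree bookkeeping for the commutator factor is correct so that the decay $q^{n}$ genuinely survives, and that the non-commutator factors are bounded (this is where one uses that the $a_{i},b_{j}$ are bounded operators, as they are in all the $q$-deformed applications). Granting this, the finite-dimensionality of each $K_{n}$ does the rest, converting the weak convergence of $\eta_{\alpha}$ into the norm convergence needed for the low-degree part.
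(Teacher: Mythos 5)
Your proposal is correct and follows essentially the same route as the paper: move the $a_i$'s across via the adjoint, use $a_{i}(K)\subset K$ and $a_{r}|_{K}=0$ to kill the leading term $b_{1}\cdots b_{s}a_{r}\cdots a_{1}\eta_{\alpha}$, track that each commutator factor in the error terms acts at degree at least $n-(r+s)$ so it contributes a factor $O(q^{n})$, and then split $\eta_{\alpha}$ into a low-degree part (where weak convergence in the finite-dimensional spaces $K_{n}$ gives norm convergence) and a geometrically small high-degree tail. The only cosmetic difference is that you sum the tail by Cauchy--Schwarz where the paper uses a crude $\sup_{n}\|\eta_{\alpha}^{(n)}\|\,q^{N}/(1-q)$ bound; both work.
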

\begin{proof}
Put
\[
T_{ij}^{(n)}=(a_{i}b_{j}-b_{j}a_{i})|_{H_{n}},\quad1\leq i\leq r,1\leq j\leq s,n\geq1.
\]
Then for each $i$ we may write
\[
a_{i}b_{1}\cdots b_{s}\xi-b_{1}\cdots b_{s}a_{i}\xi=\sum_{j=1}^{s}b_{1}\cdots b_{j-1}T_{ij}^{(m(j,n))}b_{j+1}\cdots b_{s}\xi,\quad\xi\in H_{n},
\]
where $m(j,n)$ is an integer greater than $n-s$. Iterating this formula
we obtain
\begin{align*}
a_{r}\cdots a_{1}b_{1}\cdots b_{s}\xi & =b_{1}\cdots b_{s}a_{r}\cdots a_{1}\xi+\sum_{i=1}^{r}(a_{r}\cdots a_{i}b_{1}\cdots b_{s}a_{i-1}\cdots a_{1}\xi-a_{r}\cdots a_{i+1}b_{1}\cdots b_{s}a_{i}\cdots a_{1}\xi)\\
 & =b_{1}\cdots b_{s}a_{r}\cdots a_{1}\xi+\sum_{i=1}^{r}a_{r}\cdots a_{i+1}\left(\sum_{j=1}^{s}b_{1}\cdots b_{j-1}T_{ij}^{(m'(i,j,n))}b_{j+1}\cdots b_{s}\right) a_{i-1}\cdots a_{1}\xi,
\end{align*}
where $\xi\in H_{n}$ and for each $i,j,n$ the integer $m'(i,j,n)$ is greater that
$n-s-r$. Now we consider two bounded nets $(\xi_{\alpha}),(\eta_{\alpha})\subset K$
such that $\eta_{\alpha}\to0$ weakly. Write
\[
\eta_{\alpha}=(\eta_{\alpha}^{(n)})_{n\geq1},\quad\eta_{\alpha}^{(n)}\in K_{n}.
\]
We have
\[
\langle a_{1}^{*}\cdots a_{r}^{*}\xi_{\alpha},b_{1}\cdots b_{s}\eta_{\alpha}\rangle=\langle\xi_{\alpha},a_{r}\cdots a_{1}b_{1}\cdots b_{s}\eta_{\alpha}\rangle,
\]
and by the assumptions $a_{r}\cdots a_{1}\eta_{\alpha}=0$, so together
with the previous computations for $a_{r}\cdots a_{1}b_{1}\cdots b_{s}\xi$,
we obtain
\begin{equation}
\langle a_{1}^{*}\cdots a_{r}^{*}\xi_{\alpha},b_{1}\cdots b_{s}\eta_{\alpha}\rangle=\sum_{n\geq1}\langle\xi_{\alpha},T_{n}\eta_{\alpha}^{(n)}\rangle,\label{eq:lem conv inner prod}
\end{equation}
where
\[
T_{n}=\sum_{i=1}^{r}a_{r}\cdots a_{i+1}\left(\sum_{j=1}^{s}b_{1}\cdots b_{j-1}T_{ij}^{(m'(i,j,n))}b_{j}\cdots b_{s}\right)a_{i-1}\cdots a_{1}.
\]
Recall that $\|T_{ij}^{(k)}\|\leq q^{k}$ for all $i,j,k$ by assumption.
So for each $\alpha$ and $n$
\[
\|T_{n}\eta_{\alpha}^{(n)}\|\leq C(q,r,s)q^{n}\|\eta_{\alpha}^{(n)}\|,
\]
where $C(q,r,s)$ is a constant independent of $n$. Together with
\eqref{eq:lem conv inner prod} we have
\begin{align}
|\langle a_{1}^{*}\cdots a_{r}^{*}\xi_{\alpha},b_{1}\cdots b_{s}\eta_{\alpha}\rangle| & \leq C(q, r,s)\sup_{\alpha}\|\xi_{\alpha}\|\sum_{n\geq1}q^{n}\|\eta_{\alpha}^{(n)}\|.\label{eq:lem conv estimate for inner prod}
\end{align}
Since $\eta_{\alpha}\to0$ weakly, we have for each $N\geq1$,
\[
\sum_{n=1}^{N}q^{n}\|\eta_{\alpha}^{(n)}\|\xrightarrow[\alpha]{}0,
\]
and on the other hand,
\[
\sum_{n\geq N}q^{n}\|\eta_{\alpha}^{(n)}\|\leq\sup_{n}\|\eta_{\alpha}^{(n)}\|q^{N}/(1-q).
\]
Therefore by \eqref{eq:lem conv estimate for inner prod} we get
\[
\forall N\geq1,\quad\limsup_{\alpha}|\langle a_{1}^{*}\cdots a_{r}^{*}\xi_{\alpha},b_{1}\cdots b_{s}\eta_{\alpha}\rangle|\leq C'(r,s,q)q^{N},
\]
with a constant $C'(r,s,q)$ independent of $N$, which means that
\[
\langle a_{1}^{*}\cdots a_{r}^{*}\xi_{\alpha},b_{1}\cdots b_{s}\eta_{\alpha}\rangle\to0,
\]
as desired.
\end{proof}

\section{Factoriality of mixed $q$-Gaussian algebras}

Let $N \in \mathbb{N}$, let $Q=(q_{ij})_{i,j=1}^N$ be a symmetric matrix with $q_{ij}\in(-1,1)$,
and let $H_{\mathbb{R}}$ be a finite-dimensional real Hilbert space with
orthonormal basis $e_{1},\ldots,e_{N}$. We recall briefly the construction
of mixed Gaussian algebras, as introduced in \cite{bozejkospeicher94yangbaxter}.
Write $H=H_{\mathbb{R}}+\mathrm{i}H_{\mathbb{R}}$ to be the complexification
of $H_{\mathbb{R}}$. Let $\mathcal{F}_{Q}(H)$ be the Fock space
associated to the Yang-Baxter operator
\[
T:H\otimes H\to H\otimes H,\quad e_{i}\otimes e_{j}\mapsto q_{ij}e_{j}\otimes e_{i}
\]
constructed in \cite{bozejkospeicher94yangbaxter}. Denote by $\langle\cdot,\cdot\rangle$
the inner product on $\mathcal{F}_{Q}(H)$ and let $\Omega$ be the
vacuum vector. Denote by $\varphi(\cdot)=\langle\cdot\Omega,\Omega\rangle$
the vacuum state. The left creation operators $l_{i}$ are defined
by the formulas
\[
l_{i}\xi=e_{i}\otimes\xi,\quad\xi\in\mathcal{F}_{Q}(H),
\]
and their adjoints, the left annihilation operators, can be characterised  by equalities
\[
l_{i}^{*}\Omega=0,
\]
\[
l_{i}^{*}(e_{j_{1}}\otimes\cdots\otimes e_{j_{n}})=\sum_{k=1}^{n}\delta_{i,j_{k}}q_{ij_{1}}\cdots q_{ij_{k-1}}e_{j_{1}}\otimes\cdots\otimes e_{j_{k-1}}\otimes e_{j_{k+1}}\otimes\cdots\otimes e_{j_{n}}.
\]
Similarly, we have the right creation/annihilation operators
\[
r_{i}\xi=\xi\otimes e_{i},\quad\xi\in\mathcal{F}_{Q}(H),
\]
\[
r_{i}^{*}\Omega=0,
\]
\[
r_{i}^{*}(e_{j_{1}}\otimes\cdots\otimes e_{j_{n}})=\sum_{k=1}^{n}\delta_{i,j_{k}}q_{ij_{k+1}}\cdots q_{ij_{n}}e_{j_{1}}\otimes\cdots\otimes e_{j_{k-1}}\otimes e_{j_{k+1}}\otimes\cdots\otimes e_{j_{n}}.
\]
We consider the associated mixed $q$-Gaussian algebra $\Gamma_{Q}(H_{\mathbb{R}})$
generated by the self-adjoint variables $s_{j}=l_{j}^{*}+l_{j}$.
Denote
\[
q=\max_{i,j}|q_{ij}|<1.
\]
By a \emph{word} in $\mathcal{F}_{Q}(H)$ we mean a vector in $\mathcal{F}_{Q}(H)$ of the
form $\zeta_{1}\otimes\cdots\otimes \zeta_{n}$ with some $n\geq1$ and $\zeta_{1},\ldots,\zeta_{n}\in H$.
Kr\'olak \cite{krolak00wickyangbaxter} proved that any word $\xi\in\mathcal{F}_{Q}(H)$
corresponds to a \emph{Wick product} $W(\xi)\in\Gamma_{Q}(H_{\mathbb{R}})$
with $W(\xi)\Omega=\xi$. Also, \cite{bozejkospeicher94yangbaxter}
remarked that $J\Gamma_{Q}(H_{\mathbb{R}})J$ is the commutant of
$\Gamma_{Q}(H_{\mathbb{R}})$, where $J$ is the conjugation operator
given by
\[
J(e_{i_{1}}\otimes\cdots\otimes e_{i_{n}})=e_{i_{n}}\otimes\cdots\otimes e_{i_{1}}.
\]
We write
\[
W_{r}(\xi)=JW(J\xi)J,\quad\xi\in\oplus_{n}H^{\otimes n}.
\]
Then $W_{r}(\xi)\in\Gamma_{Q}(H_{\mathbb{R}})'$.
\begin{lem}
\label{lem:commutator mixed}For each $n \in \mathbb{N}$ and $i,j=1,\ldots,N$ the operators $T_{i}^{(n)}$
on $H^{\otimes n}$  characterised by the equalities
\[
l_{i}^{*}r_{j}-r_{j}l_{i}^{*}=\delta_{ij}\oplus_{n}T_{i}^{(n)}.
\]
satisfy the norm estimate $\|T_{i}^{(n)}\|\leq q^{n}$.
\end{lem}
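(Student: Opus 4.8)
The plan is to compute the action of the commutator $l_i^* r_j - r_j l_i^*$ directly on a basis word $e_{j_1} \otimes \cdots \otimes e_{j_n}$ and read off both the $\delta_{ij}$ factor and the norm bound. First I would write out $l_i^* r_j$ applied to such a word: $r_j$ appends $e_j$ on the right, producing a word of length $n+1$, and then $l_i^*$ annihilates matching $e_i$'s from the left, summing over all positions $k$ (now including the newly added position $n+1$) with the appropriate product of $q$-factors $q_{i j_1} \cdots q_{i j_{k-1}}$. Similarly $r_j l_i^*$ first annihilates from the left and then appends $e_j$. The key observation is that the terms coming from annihilating at positions $k = 1, \ldots, n$ appear in both $l_i^* r_j$ and $r_j l_i^*$, and they should nearly cancel; the discrepancy in each such term is only in the $q$-coefficients, which differ according to whether the extra letter $e_j$ sits at the end when the annihilation coefficient is computed.

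The main structural point I expect is that the only genuinely surviving contribution comes from annihilating the freshly-created rightmost letter $e_j$ in $l_i^* r_j$, which requires $i = j$ and contributes the factor $q_{i j_1} \cdots q_{i j_n}$ times the original word $e_{j_1} \otimes \cdots \otimes e_{j_n}$; this is exactly where the factor $\delta_{ij}$ enters, explaining why the commutator vanishes unless $i = j$. For the matched positions $k \leq n$, I would track that the coefficient from $l_i^* r_j$ carries one extra factor $q_{i j}$ relative to $r_j l_i^*$ (since in $l_i^* r_j$ the word already has $e_j$ appended when $l_i^*$ acts, but this only affects positions to the right of $k$, so actually these coefficients coincide), so after careful bookkeeping these contributions cancel exactly. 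Thus $T_i^{(n)}$ acts, up to the cancellations, essentially as multiplication tied to a product of $n$ deformation parameters.

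Having isolated the surviving term, the norm estimate follows because the operator $T_i^{(n)}$ sends $e_{j_1} \otimes \cdots \otimes e_{j_n}$ to a vector built from at most a bounded number of rearrangements, each weighted by a product of $n$ scalars $q_{i j_k}$, every one of which has absolute value at most $q = \max_{i,j} |q_{ij}| < 1$. Hence each surviving coefficient is bounded by $q^n$. The subtlety I anticipate is that the $q$-deformed Fock space inner product is not the standard one, so bounding the operator norm is not merely a matter of bounding coefficients on an orthonormal basis; I would handle this by appealing to the fact that the $q$-deformed inner product on $H^{\otimes n}$ is comparable to (indeed dominated in the relevant direction by) a positive form for which the relevant estimates hold, together with the explicit structure showing $T_i^{(n)}$ reduces to scalar multiplication by a product of $n$ parameters on the vacuum-type surviving component. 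The harder part is the exact combinatorial cancellation of the matched annihilation terms rather than the final scalar bound, so I would devote most care to verifying that bookkeeping.
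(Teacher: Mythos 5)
Your computation of the commutator is exactly the paper's: both $l_i^*r_j$ and $r_jl_i^*$ produce the same sum over annihilation positions $k=1,\dots,n$ (the coefficient $q_{ij_1}\cdots q_{ij_{k-1}}$ involves only letters to the \emph{left} of position $k$, so appending $e_j$ on the right changes nothing, and these terms cancel exactly, as you correctly conclude after your parenthetical self-correction), leaving only the term where $l_i^*$ annihilates the newly appended $e_j$, namely $\delta_{ij}\,q_{ij_1}\cdots q_{ij_n}\, e_{j_1}\otimes\cdots\otimes e_{j_n}$. Note that this means $T_i^{(n)}$ is genuinely \emph{diagonal} in the basis of words --- each word is sent to a scalar multiple of itself, with no ``rearrangements'' at all.

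Where your proposal falls short is the norm estimate. You rightly flag that the deformed inner product is the issue, but the fix you sketch --- comparability of the $Q$-inner product with an undeformed positive form --- does not by itself give $\|T_i^{(n)}\|\leq q^n$: a two-sided norm comparison introduces the condition number of the deforming operator $P_Q^{(n)}$ on $H^{\otimes n}$, a constant depending on $n$ that would spoil the exponential decay you need in Lemma \ref{lem:conv general case}. The missing idea is that the eigenvalue $\delta_{ij}q_{ij_1}\cdots q_{ij_n}$ depends only on the \emph{multiset} $\{j_1,\dots,j_n\}$, while the deformed inner product of two basis words vanishes unless one is a permutation of the other; hence the eigenspaces of $T_i^{(n)}$ corresponding to distinct eigenvalues are mutually orthogonal in $\mathcal{F}_Q(H)$ (equivalently, $T_i^{(n)}$ commutes with $P_Q^{(n)}$ and is self-adjoint for the deformed inner product). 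The operator norm is then the largest eigenvalue modulus, which is at most $q^n$ with no extra constant. Your ``scalar multiplication on the surviving component'' remark gestures in this direction, but the orthogonality/commutation step is the actual content of the estimate and needs to be stated.
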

\begin{proof}
The case of $n=0$ is obvious and we take $n\geq1$ in the following.
Observe that
\begin{align*}
l_{i}^{*}r_{j}(e_{j_{1}}\otimes\cdots\otimes e_{j_{n}}) & =l_{i}^{*}(e_{j_{1}}\otimes\cdots\otimes e_{j_{n}}\otimes e_{j})\\
 & =\sum_{k=1}^{n}\delta_{i,j_{k}}q_{ij_{1}}\cdots q_{ij_{k-1}}e_{j_{1}}\otimes\cdots\otimes e_{j_{k-1}}\otimes e_{j_{k+1}}\otimes\cdots\otimes e_{j_{n}}\otimes e_{j}\\
 & \quad\ +\delta_{ij}q_{ij_{1}}\cdots q_{ij_{n}}e_{j_{1}}\otimes\cdots\otimes e_{j_{n}},
\end{align*}
and
\[
r_{j}l_{i}^{*}(e_{j_{1}}\otimes\cdots\otimes e_{j_{n}})=\sum_{k=1}^{n}\delta_{i,j_{k}}q_{ij_{1}}\cdots q_{ij_{k-1}}e_{j_{1}}\otimes\cdots\otimes e_{j_{k-1}}\otimes e_{j_{k+1}}\otimes\cdots\otimes e_{j_{n}}\otimes e_{j}.
\]
Now take
\[
T_{i}^{(n)}:H^{\otimes n}\to H^{\otimes n},\quad e_{j_{1}}\otimes\cdots\otimes e_{j_{n}}\mapsto\delta_{ij}q_{ij_{1}}\cdots q_{ij_{n}}e_{j_{1}}\otimes\cdots\otimes e_{j_{n}}.
\]
The eigenspace of $T_{i}^{(n)}$ corresponding to $\delta_{ij}q_{ij_{1}}\cdots q_{ij_{n}}$
is spanned by the vectors of the type $E_{\{j_{1},\ldots,j_{n}\}}=\{e_{j_{1}'}\otimes\cdots\otimes e_{j_{n}'}:q_{ij_{1}}\cdots q_{ij_{n}}=q_{ij_{1}'}\cdots q_{ij_{n}'}\}$,
which are orthogonal for distinct $\underline{j}=\{j_{1},\ldots,j_{n}\}$.
So
\[
\|T_{i}^{(n)}\|\leq\max\{q_{ij_{1}}\cdots q_{ij_{n}}:1\leq j_{1},\ldots,j_{n}\leq N\}\leq q^{n}
\]
and $T_{i}^{(n)}$ is the desired operator.
\end{proof}
Now the following main result is in reach. The idea is partially inspired
by the proof in \cite{ricard05qfactor} in conjunction with Lemma
\ref{lem:conv general case}.
\begin{thm}
\label{thm:factor mixed}For each $1\leq i\leq n$, the von Neumann
subalgebra generated by $s_{i}$ is maximal abelian in $\Gamma_{Q}(H_{\mathbb{R}})$.
In particular, $\Gamma_{Q}(H_{\mathbb{R}})$ is a factor if $n\geq2$.\end{thm}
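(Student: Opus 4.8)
The plan is to deduce factoriality from the stronger maximal abelianness assertion, and to prove the latter along the lines of \cite{ricard05qfactor}, with Lemma~\ref{lem:conv general case} supplying the decisive asymptotic estimate.

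\emph{Reduction of factoriality to the masa property.} Recall that the vacuum state $\varphi$ is a faithful trace on $M:=\Gamma_{Q}(H_{\mathbb{R}})$, so $\Omega$ is cyclic and separating. A direct induction shows that $s_{i}^{k}\Omega$ lies in $\mathrm{span}\{e_{i}^{\otimes m}:m\leq k\}$ and that these vectors span, whence $\overline{W^{*}(s_{i})\Omega}=\overline{\mathrm{span}}\{e_{i}^{\otimes m}:m\geq0\}=:K^{(i)}$. Granting that each $W^{*}(s_{i})$ is maximal abelian, the centre $\mathcal{Z}(M)$ commutes with every $s_{i}$ and hence lies in $W^{*}(s_{i})'\cap M=W^{*}(s_{i})$; thus $z\Omega\in K^{(i)}$ for every central $z$ and all $i$. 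Since $K^{(i)}\cap K^{(j)}=\mathbb{C}\Omega$ for $i\neq j$ (the words $e_{i}^{\otimes m}$ and $e_{j}^{\otimes m}$ being orthogonal for $m\geq1$), as soon as $N\geq2$ we obtain $z\Omega\in\mathbb{C}\Omega$, and the separating property of $\Omega$ forces $z\in\mathbb{C}1$.

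\emph{Set-up for the masa property.} Fix $i$, write $A:=W^{*}(s_{i})$ and $K:=K^{(i)}=\overline{A\Omega}$, with orthogonal projection $P_{K}$. As $\varphi$ is tracial, the $\varphi$-preserving conditional expectation $E_{A}\colon M\to A$ exists and satisfies $E_{A}(x)\Omega=P_{K}x\Omega$. Hence for $x\in A'\cap M$ it suffices to prove that $x\Omega\in K$: this gives $x\Omega=P_{K}x\Omega=E_{A}(x)\Omega$, and since $\Omega$ is separating, $x=E_{A}(x)\in A$. The whole task is thus to show that the component of $x\Omega$ orthogonal to $K$ vanishes, i.e. that $\langle x\Omega,v\rangle=0$ for every word $v$ with $v\perp K$.

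\emph{The asymptotic estimate.} Because $x$ commutes with $s_{i}=W(e_{i})$, it commutes with every Wick product $W(e_{i}^{\otimes n})$, as these are polynomials in $s_{i}$ and hence lie in $A$; therefore $x\,e_{i}^{\otimes n}=W(e_{i}^{\otimes n})\,x\Omega$ for all $n$. Expanding $W(e_{i}^{\otimes n})$ and the right Wick operator $W_{r}(v)\in M'$ into creation and annihilation operators and pairing the above identity against $W_{r}(v)\,e_{i}^{\otimes n}$, the quantity $\langle x\Omega,v\rangle$ is expressed as a limit of inner products of the shape $\langle a_{1}^{*}\cdots a_{r}^{*}\,\xi_{n},\,b_{1}\cdots b_{s}\,\eta_{n}\rangle$, where the $a$'s are left operators $l_{k},l_{k}^{*}$, the $b$'s are right operators $r_{k},r_{k}^{*}$, and $\xi_{n}=\eta_{n}=e_{i}^{\otimes n}/\|e_{i}^{\otimes n}\|$ are unit vectors tending to $0$ weakly. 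Here I invoke Lemma~\ref{lem:conv general case} with $H_{n}=H^{\otimes n}$ and $K_{n}=\mathbb{C}\,e_{i}^{\otimes n}$: the subspace $K=\oplus_{n}K_{n}$ is preserved by $l_{i}$ and $l_{i}^{*}$, while $l_{j}^{*}|_{K}=0$ for $j\neq i$; since $v\perp K$ forces an off-diagonal letter, one can arrange $a_{r}=l_{j}^{*}$ with $j\neq i$, meeting the hypothesis $a_{r}|_{K}=0$. The commutator bound $\|(a_{i}b_{j}-b_{j}a_{i})|_{H^{\otimes n}}\|\leq q^{n}$ is provided by Lemma~\ref{lem:commutator mixed} together with the exact commutation of left and right creations. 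Lemma~\ref{lem:conv general case} then forces each such inner product to tend to $0$, leaving only the diagonal contribution and yielding $\langle x\Omega,v\rangle=0$, as required.

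\emph{Main obstacle.} The conceptual steps are short; the real work lies in the third paragraph, namely the bookkeeping needed to expand $W(e_{i}^{\otimes n})$ and $W_{r}(v)$ so that the pairing computing $\langle x\Omega,v\rangle$ is realised \emph{exactly} as a finite sum of inner products of the form governed by Lemma~\ref{lem:conv general case}. One must check that every summand carrying the off-diagonal word $v$ indeed admits a factorisation whose top left operator $a_{r}=l_{j}^{*}$ ($j\neq i$) annihilates $K$, that the chosen creation/annihilation families genuinely satisfy the lemma's hypotheses (boundedness of the normalised nets, weak nullity of $\eta_{n}$, and the $q^{n}$ commutator estimate from Lemma~\ref{lem:commutator mixed}), and that the surviving diagonal term reproduces $\langle x\Omega,v\rangle$ rather than some spurious quantity. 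The traciality of $\varphi$, used to produce $E_{A}$ and the identity $E_{A}(x)\Omega=P_{K}x\Omega$, is what allows the clean passage from $x\Omega\in K$ back to $x\in A$.
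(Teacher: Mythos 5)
Your reduction of factoriality to the masa property, your identification of $\overline{W^{*}(s_{i})\Omega}=\mathcal{F}_{Q}(\mathbb{C}e_{i})$, and your use of the trace-preserving conditional expectation $E_{A}$ to pass from $x\Omega\in K$ back to $x\in A$ are all correct (the last point is a legitimate alternative to the paper's appeal to the second quantization of the projection onto $\mathbb{R}e_{i}$). The problem is the third paragraph, which you yourself flag as ``the real work'': it is not carried out, and as set up it would not go through. First, pairing $x\,e_{i}^{\otimes n}=W(e_{i}^{\otimes n})x\Omega$ against $W_{r}(v)e_{i}^{\otimes n}$ does \emph{not} express $\langle x\Omega,v\rangle$ as a limit of inner products of the required shape: one gets $\langle W(e_{i}^{\otimes n})^{2}x\Omega,v\rangle$, and $\|e_{i}^{\otimes n}\|^{-2}W(e_{i}^{\otimes n})^{2}$ is far from a multiple of the identity, so a whole family of ``off-diagonal'' terms must be estimated before anything converges to $\langle x\Omega,v\rangle$. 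This is precisely the delicate bookkeeping of Ricard's original argument, and none of it appears in your sketch. Second, your assignment of roles in Lemma~\ref{lem:conv general case} is inconsistent: the off-diagonal letter forced by $v\perp K$ sits inside the \emph{right} Wick operator $W_{r}(v)$, so the operator that annihilates $K=\oplus_{n}\mathbb{C}e_{i}^{\otimes n}$ is a right annihilator $r_{j}^{*}$ with $j\neq i$, whereas in your pairing the left operators all come from $W(e_{i}^{\otimes n})$ (hence are $l_{i},l_{i}^{*}$ only) or from an arbitrary approximant of $x$; there is no way to ``arrange $a_{r}=l_{j}^{*}$ with $j\neq i$'' so that the hypotheses $a_{k}(K)\subset K$, $a_{r}|_{K}=0$ hold. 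The family containing the killing operator must be the one expanded from the test word $v$, i.e.\ the right operators, exactly as in the paper.

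The paper circumvents both difficulties with a different weakly null net: since $W^{*}(s_{i})$ is diffuse and abelian, it contains Rademacher-type unitaries $u_{\alpha}$ with $u_{\alpha}=u_{\alpha}^{*}$, $u_{\alpha}^{2}=1$ and $u_{\alpha}\Omega\to0$ weakly in $\mathcal{F}_{Q}(\mathbb{C}e_{i})$. Traciality of $\varphi$ then gives the \emph{exact} identity
\[
\langle\xi,x\Omega\rangle=\varphi(x^{*}u_{\alpha}^{2}W(\xi))=\varphi(u_{\alpha}x^{*}u_{\alpha}W(\xi))=\langle W_{r}(\xi)u_{\alpha}\Omega,xu_{\alpha}\Omega\rangle
\]
for every $\alpha$, with no error or diagonal terms to control; traciality also yields the uniform boundedness needed to replace $x$ by finite Wick words (a point your sketch omits). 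After that, Lemma~\ref{lem:conv general case} is applied with the $a$'s equal to $r_{i_{1}}^{*},\dots,r_{i_{s'}}^{*}$ (the first off-diagonal index $i_{s'}$ giving $r_{i_{s'}}^{*}|_{\mathcal{F}_{Q}(\mathbb{C}e_{i})}=0$) and the $b$'s equal to the left operators, together with the commutator bound of Lemma~\ref{lem:commutator mixed}. To repair your proof you would either have to adopt this device, or supply the full asymptotic analysis of $W(e_{i}^{\otimes n})^{2}$ that your normalization scheme requires.
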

\begin{proof}
By \cite{bozejkokummerspeicher97q}, we know that the spectral measure
of $s_{i}$ is the $q$-semicircular law with $q=q_{ii}$. Therefore
the von Neumann algebra $M$ generated by $s_{i}$ is diffuse and
abelian, and hence  isomorphic to the von Neumann algebra $L^{\infty}([0,1],dm)$
where $dm$ denotes the Lebesgue measure on $[0,1]$. As a result,
we may find a sequence of unitaries $(u_{\alpha})_{\alpha\in \mathbb{N}}\subset M$
which correspond to Rademacher functions via this isomorphism. In
particular, we have
\[
u_{\alpha}=u_{\alpha}^{*},\quad u_{\alpha}^{2}=1,\quad u_{\alpha}\Omega\to0\text{ weakly in }\mathcal{F}_{Q}(H).
\]
Now assume $x\in\Gamma_{Q}(H_{\mathbb{R}})$ with $xs_{i}=s_{i}x$,
and hence
\[
xy=yx,\quad y\in M.
\]
Let $\mathcal{F}_{Q}(\mathbb{C}e_{i})\subset\mathcal{F}_{Q}(H)$ be
the Fock space associated to $e_{i}$. Observe that for any vector
$\xi\in \bigcup_{m\in \mathbb{N}} H^{\otimes m}$  and all $\alpha\geq1$ we have
\begin{equation}
\langle\xi,x\Omega\rangle=\varphi(x^{*}W(\xi))=\varphi(x^{*}u_{\alpha}^{2}W(\xi))=\varphi(u_{\alpha}x^{*}u_{\alpha}W(\xi))=\langle W_{r}(\xi)u_{\alpha}\Omega,xu_{\alpha}\Omega\rangle.\label{eq:mixed inner prod zero}
\end{equation}
We remark that if further $\xi$ is orthogonal to $\mathcal{F}_{Q}(\mathbb{C}e_{i})$ then
\begin{equation}
\forall y\in\Gamma_{Q}(H_{\mathbb{R}}),\quad\langle W_{r}(\xi)u_{\alpha}\Omega,yu_{\alpha}\Omega\rangle\to0.\label{eq:mixed conv infinite}
\end{equation}
To see this, it suffices to consider the case $y\Omega\in H^{\otimes n}$
for an arbitrary $n\geq0$ since it is easy to see that the functionals
$y^*\Omega\mapsto\langle W_{r}(\xi)u_{\alpha}\Omega,yu_{\alpha}\Omega\rangle$
extend to uniformly bounded functionals on $\mathcal{F}_{Q}(H)$ thanks
to the traciality of $\varphi$ (\cite[Theorem 4.4]{bozejkospeicher94yangbaxter}).
Now by the Wick formula in \cite[Theorem 1]{krolak00wickyangbaxter},
it is enough to prove the convergence
\begin{equation}
\langle r_{i_{1}}\cdots r_{i_{s}}r_{i_{s+1}}^{*}\cdots r_{i_{p}}^{*}u_{\alpha}\Omega,l_{j_{1}}\cdots l_{j_{t}}l_{j_{t+1}}^{*}\cdots l_{j_{q}}^{*}u_{\alpha}\Omega\rangle\to0\label{eq:them mixed conv}
\end{equation}
for any fixed indices $i_{1},\ldots,i_{p},j_{1},\ldots,j_{q}$ with
some $i_{k}\neq i$. Denote
\[
s'=\min\{k:i_{k}\neq i\}.
\]
If $s'>s$, we have $r_{i_{s'}}^{*}\cdots r_{i_{p}}^{*}u_{\alpha}\Omega=0$
for all $\alpha\geq1$ and the convergence \eqref{eq:them mixed conv}
becomes trivial. So we assume in the following $s'\leq s$. Note that
by definition
\[
r_{i}l_{j}-l_{j}r_{i}=0,\quad r_{i}^{*}l_{j}^{*}-l_{j}^{*}r_{i}^{*}=0,
\]
and by Lemma \ref{lem:commutator mixed}
\[
\|(l_{i}^{*}r_{j}-r_{j}l_{i}^{*})|_{H^{\otimes n}}\|\leq q^{n},\quad n\geq1.
\]
Also, observe that by the choice of $s'$,
\[
r_{i_{s'}}^{*}|_{\mathcal{F}_{Q}(\mathbb{R}e_{i})}=0,\quad r_{i_{k}}^{*}(\mathcal{F}_{Q}(\mathbb{R}e_{i}))\subset\mathcal{F}_{Q}(\mathbb{R}e_{i}),\quad1\leq k<s'.
\]
So now applying Lemma \ref{lem:conv general case} to the families
of operators $r_{i_{1}}^{*},\ldots,r_{i_{s'}}^{*}$ and $l_{j_{1}},\ldots,l_{j_{t}},l_{j_{t+1}}^{*},\ldots,l_{j_{q}}^{*}$,
we obtain the convergence \eqref{eq:them mixed conv}. As a consequence,
the convergence \eqref{eq:mixed conv infinite} holds as well, which,
together with \eqref{eq:mixed inner prod zero}, yields that
\[
\langle\xi,x\Omega\rangle=0.
\]
This means that $x\Omega\in\mathcal{F}_{Q}(\mathbb{C}e_{i})$ since
$\xi$ is arbitrarily chosen in a dense subset of $\mathcal{F}_{Q}(\mathbb{C}e_{i})^{\bot}$.
We can then deduce that $x\in M$ using the second quantization of the projection
$P:H_{\mathbb{R}}\to\mathbb{R}e_{i}$ (see \cite[Lemma 3.1]{lustpiquard99qfock}).
Thus we have shown that the von Neumann subalgebra $M$ generated by $s_{i}$
is maximal abelian in $\Gamma_{Q}(H_{\mathbb{R}})$.

Also, if $x\in\Gamma_{Q}(H_{\mathbb{R}})\cap\Gamma_{Q}(H_{\mathbb{R}})'$,
then the above argument shows that $x\Omega\in\cap_{i=1}^{n}\mathcal{F}_{Q}(\mathbb{C}e_{i})$,
so $x\Omega\in\mathbb{C}\Omega$. Therefore $\Gamma_{Q}(H_{\mathbb{R}})$
is a factor.
\end{proof}

\section{Factoriality of $q$-Araki-Woods algebras}

Now we discuss the factoriality of $q$-Araki-Woods algebras. We refer
to \cite{hiai03qaw} for the detailed description of the construction of these
algebras and only sketch the outline below. Following the notation of \cite{hiai03qaw}, given a real
Hilbert space $H_{\mathbb{R}}$ with a strongly continuous group $U_{t}$
of orthogonal transformations on $H_{\mathbb{R}}$, we may introduce
a deformed inner product $\langle\cdot,\cdot\rangle_{U}$ on $H_{\mathbb{C}}\coloneqq H_{\mathbb{R}}+\mathrm{i}H_{\mathbb{R}}$.
Denote by $H$ the completion of $H_{\mathbb{C}}$ with respect to
$\langle\cdot,\cdot\rangle_{U}$ and denote by $\mathcal{F}_{q}(H)$
the $q$-Fock space associated to $H$. We define  the left and right
creation operators
\[
l(\xi)\eta=\xi\otimes\eta,\quad r(\xi)\eta=\eta\otimes\xi,\quad\xi\in H,\eta\in\mathcal{F}_{q}(H)
\]
and the left and right annihilation operators
\[
l^{*}(\xi)=l(\xi)^{*},\quad r^{*}(\xi)=r(\xi)^{*},\quad\xi\in H.
\]
We denote by $\Gamma_{q}(H_{\mathbb{R}},U_{t})$ (resp. $C_{q}^{*}(H_{\mathbb{R}},U_{t})$)
the von Neumann algebra (resp. C{*}-algebra) generated
by $\{l(e)+l^{*}(e):e\in H_{\mathbb{R}}\}$ in $B(\mathcal{F}_{q}(H))$,
to be called the $q$-Araki-Woods von Neumann algebra. Properties of the vacuum state guarantee the existence of the
\emph{Wick product} map $W:\Gamma_{q}(H_{\mathbb{R}},U_{t})\Omega\to\Gamma_{q}(H_{\mathbb{R}},U_{t})$
such that $W(\xi)\Omega=\xi$. On the other hand, denote
\[
H_{\mathbb{R}}'=\{\xi\in H:\ \forall\eta\in H_{\mathbb{R}},\ \langle\xi,\eta\rangle\in\mathbb{R}\}.
\]
Then the von Neumann algebra $\Gamma_{q,r}(H_{\mathbb{R}},U_{t})$
generated by $\{r(e)+r^{*}(e):e\in H_{\mathbb{R}}'\}$ in $B(\mathcal{F}_{q}(H))$
is the commutant of $\Gamma_{q}(H_{\mathbb{R}},U_{t})$, and again there
exist a right Wick product\emph{ }$W_{r}:\Gamma_{q,r}(H_{\mathbb{R}},U_{t})\Omega\to\Gamma_{q,r}(H_{\mathbb{R}},U_{t})$
such that $W_{r}(\xi)\Omega=\xi$. We denote by $I$ the standard complex conjugation
on $H_{\mathbb{R}}+\mathrm{i}H_{\mathbb{R}}$, and by $I_{r}$ the complex
conjugation on $H_{\mathbb{R}}'+\mathrm{i}H_{\mathbb{R}}'$. The following
observations are well-known and we state them here for later use.
\begin{lem}
\label{lem:qaw preliminary lem}\emph{(1)} Suppose that $e_{1},\dots,e_{n}\in H_{\mathbb{C}}$.
Then we have the following Wick formula
\begin{equation}
W(e_{1}\otimes\dots\otimes e_{n})=\sum_{k=0}^{n}\sum_{i_{1},\dots,i_{k},j_{k+1},\dots,j_{n}}l(e_{i_{1}})\dots l(e_{i_{k}})l^{*}(Ie_{j_{k+1}})\dots l^{*}(Ie_{j_{n}})q^{i(I_{1},I_{2})},\label{Wickformula}
\end{equation}
where $I_{1}=\{i_{1},\dots,i_{k}\}$ and $I_{2}=\{j_{k+1},\dots,j_{n}\}$
form a partition of the set $\{1,\dots,n\}$ and $i(I_{1},I_{2})$
is the number of crossings. A similar formula holds for $W_{r}(e_{1}\otimes\cdots\otimes e_{n})$
as well.

\emph{(2)} Let $f\in H_{\mathbb{R}}$, $e\in H_{\mathbb{R}}'+\mathrm{i}H_{\mathbb{R}}'$.
If $\langle e,f\rangle=0$, then $\langle I_{r}e,f\rangle=0$.\end{lem}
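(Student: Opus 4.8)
The two assertions are of quite different character, so I would prove them separately; part (2) is a one-line computation while part (1) carries all the weight. For part (2), I would decompose $e=e_{1}+\mathrm{i}e_{2}$ with $e_{1},e_{2}\in H_{\mathbb{R}}'$, so that by definition $I_{r}e=e_{1}-\mathrm{i}e_{2}$. Since $f\in H_{\mathbb{R}}$ and $e_{1},e_{2}\in H_{\mathbb{R}}'$, the very definition of $H_{\mathbb{R}}'$ gives $\langle e_{1},f\rangle,\langle e_{2},f\rangle\in\mathbb{R}$. Because the inner product is linear on the left, $\langle e,f\rangle=\langle e_{1},f\rangle+\mathrm{i}\langle e_{2},f\rangle$ is precisely the splitting of $\langle e,f\rangle$ into real and imaginary parts; hence $\langle e,f\rangle=0$ forces $\langle e_{1},f\rangle=\langle e_{2},f\rangle=0$, and therefore $\langle I_{r}e,f\rangle=\langle e_{1},f\rangle-\mathrm{i}\langle e_{2},f\rangle=0$, as claimed.

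For part (1), I would induct on $n$, peeling off the leftmost letter $e_{1}$. The engine of the induction is the standard recursion expressing a Wick product through the field operator $s(e):=l(e)+l^{*}(Ie)$, namely
\[
W(e_{1}\otimes\cdots\otimes e_{n})=s(e_{1})\,W(e_{2}\otimes\cdots\otimes e_{n})-\sum_{k=2}^{n}q^{k-2}\langle e_{k},Ie_{1}\rangle\,W(e_{2}\otimes\cdots\widehat{e_{k}}\cdots\otimes e_{n}),
\]
which both keeps $W(\cdot)$ inside $\Gamma_{q}(H_{\mathbb{R}},U_{t})$ and is verified by evaluating at $\Omega$ using $l^{*}(Ie_{1})(e_{2}\otimes\cdots\otimes e_{n})=\sum_{k}q^{k-2}\langle e_{k},Ie_{1}\rangle\,e_{2}\otimes\cdots\widehat{e_{k}}\cdots\otimes e_{n}$. (Unlike the naive verification $W(\xi)\Omega=\xi$, this recursion is what guarantees membership in the algebra, since the individual $l,l^{*}$ are not in $\Gamma_{q}(H_{\mathbb{R}},U_{t})$ while the $s(e_{i})$ are.)

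Substituting the inductive formula for the length-$(n-1)$ words, I would track the two pieces of $s(e_{1})$. The creation part $l(e_{1})$ simply prepends a leftmost creation operator to each term, producing exactly the terms of the target formula with $1\in I_{1}$; as $1$ is the smallest index, it is never the larger member of a crossing pair, so it contributes no new power of $q$. The annihilation part $l^{*}(Ie_{1})$ instead starts to the left of a normally ordered word and must be commuted into position using $l^{*}(\xi)l(\eta)=q\,l(\eta)l^{*}(\xi)+\langle\eta,\xi\rangle$. Passing it across a block of $k$ creation operators yields the fully normal-ordered term $q^{k}l(e_{i_{1}})\cdots l(e_{i_{k}})l^{*}(Ie_{1})l^{*}(\cdots)$ --- which is precisely the target term with $1\in I_{2}$, its $k$ crossings with the creation indices accounting for the factor $q^{k}$ --- together with $k$ contraction terms carrying coefficients $q^{m-1}\langle e_{i_{m}},Ie_{1}\rangle$.

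The crux, and the step I expect to be the main obstacle, is the verification that the power of $q$ generated by commuting $l^{*}(Ie_{1})$ through each creation block matches the global crossing number $i(I_{1},I_{2})$, and that the contraction terms, once summed over all splits of $\{2,\dots,n\}$ and reorganised by the contracted index, reproduce exactly the subtracted sum $\sum_{k}q^{k-2}\langle e_{k},Ie_{1}\rangle W(\cdots\widehat{e_{k}}\cdots)$ in the recursion, so that they cancel. After this cancellation only the two families of normally ordered terms survive, and they reassemble into the formula over $\{1,\dots,n\}$, completing the induction. The formula for $W_{r}$ is proved in the same way, with $l,l^{*},I$ replaced throughout by $r,r^{*},I_{r}$.
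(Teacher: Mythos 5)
Your part (2) is exactly the paper's argument: split $e=e_{1}+\mathrm{i}e_{2}$ with $e_{1},e_{2}\in H_{\mathbb{R}}'$, use $\langle e_{1},f\rangle,\langle e_{2},f\rangle\in\mathbb{R}$ to conclude that $\langle e,f\rangle=0$ forces both real inner products to vanish, hence $\langle I_{r}e,f\rangle=0$. Nothing to add there.

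For part (1) the situation is different: the paper does not prove the Wick formula at all, but simply cites \cite[Proposition 2.7]{bozejkokummerspeicher97q} and \cite[Lemma 3.1]{wasilewski16qhaargerup}. You instead attempt the standard inductive proof, peeling off $e_{1}$ via the recursion $W(e_{1}\otimes\cdots\otimes e_{n})=(l(e_{1})+l^{*}(Ie_{1}))W(e_{2}\otimes\cdots\otimes e_{n})-\sum_{k}q^{k-2}\langle e_{k},Ie_{1}\rangle W(e_{2}\otimes\cdots\widehat{e_{k}}\cdots\otimes e_{n})$ and the relation $l^{*}(\xi)l(\eta)=q\,l(\eta)l^{*}(\xi)+\langle\eta,\xi\rangle$. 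This is the right strategy and it does work, but as written your argument has an acknowledged hole precisely at its only nontrivial point: you state that "the crux" is to check that the powers of $q$ produced by normal-ordering match the crossing number $i(I_{1},I_{2})$ and that the contraction terms cancel against the subtracted sum in the recursion, and you do not carry out either verification. That cancellation is the entire content of the induction step --- it requires reorganising a double sum over partitions of $\{2,\dots,n\}$ and contracted positions $m$, and matching $q^{m-1}q^{i(I_{1}',I_{2}')}$ against $q^{k-2}q^{i(I_{1}'',I_{2}'')}$ --- so what you have is a correct proof outline rather than a proof. A smaller point you should also pin down: the target formula implicitly orders the indices within $I_{1}$ and $I_{2}$, and after commuting $l^{*}(Ie_{1})$ through the creation block it lands at the left end of the annihilation block; whether that is the position the formula assigns to the smallest index of $I_{2}$ depends on the ordering convention, and with the opposite convention you would additionally need to move $l^{*}(Ie_{1})$ past the other annihilators (which do not commute freely). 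If you want a complete self-contained argument you must settle the convention and do the bookkeeping; otherwise the honest move is the paper's, namely to quote the formula from the literature.
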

\begin{proof}
(1) See \cite[Proposition 2.7]{bozejkokummerspeicher97q}, \cite[Lemma 3.1]{wasilewski16qhaargerup}.

(2) Write $e=e_{1}+\mathrm{i}e_{2}$ with $e_{1},e_{2}\in H_{\mathbb{R}}'$.
Since $\langle e_{1},f\rangle\in\mathbb{R},\langle e_{2},f\rangle\in\mathbb{R}$,
we see that the identity $\langle e,f\rangle=0$ yields
\[
\langle e_{1},f\rangle=\langle e_{2},f\rangle=0.
\]
Therefore
\[
\langle I_{r}e,f\rangle=\langle e_{1}-\mathrm{i}e_{2},f\rangle=0.
\]

\end{proof}
According to Shlyakhtenko \cite{shlyakhtenko97freeaw}, we have the
decomposition
\[
(H_{\mathbb{R}},U_{t})=(K_{\mathbb{R}},U_{t}')\oplus(L_{\mathbb{R}},U_{t}''),
\]
where $U_{t}'$ is almost periodic and $U_{t}''$ is ergodic. Then
$K_{\mathbb{R}}\subset H_{\mathbb{R}}$ is the real closed subspace
spanned by eigenvectors of $U_{t}=A^{\mathrm{i}t}$. Let $K_{\mathbb{C}}=K_{\mathbb{R}}+\mathrm{i}K_{\mathbb{R}}$
be the complexification and $K$ be the completion of $K_{\mathbb{C}}$ with respect to the
deformed norm as above, and similarly for $L$. Note that the orthogonal
projection $P:H_{\mathbb{R}}\to K_{\mathbb{R}}$ commutes with $U_{t}$.
So by the second quantization, $\Gamma_{q}(K_{\mathbb{R}},U_{t}|_{K})$
embeds as a von Neumann subalgebra of $\Gamma_{q}(H_{\mathbb{R}},U_{t})$.
For an operator $T$ we denote by $\mathcal{F}_{q}(T)$ its second
quantization.

The following observation shows that in looking at the centre of the q-Araki-Woods algebra it suffices to consider the `$K$-part' of the algebra (we do not really use this fact in the sequel).

\begin{lem}
\emph{(1) }The semigroup $\mathcal{F}_{q}(U_{t})$ admits no eigenvectors
in $\mathcal{F}_{q}(K)^{\bot}\subset\mathcal{F}_{q}(H)$;

\emph{(2)} Assume $x\in\Gamma_{q}(H_{\mathbb{R}},U_{t})\cap\Gamma_{q}(H_{\mathbb{R}},U_{t})'$.
Then $x\Omega\in\mathcal{F}_{q}(K)$ and $x\in\Gamma_{q}(K_{\mathbb{R}},U_{t}|_{K_{\mathbb{R}}})$.\end{lem}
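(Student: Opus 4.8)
The plan is to prove the two statements in order, since (2) will rely on (1). For part (1), I would argue that the von Neumann algebra $\Gamma_q(H_{\mathbb{R}},U_t)$ is a free product (or amalgamated construction) respecting the orthogonal decomposition $(H_{\mathbb{R}},U_t)=(K_{\mathbb{R}},U_t')\oplus(L_{\mathbb{R}},U_t'')$, and correspondingly the Fock space factorizes. More directly, I would use the fact that $U_t''$ is \emph{ergodic}, meaning it has no nonzero eigenvectors on $L_{\mathbb{R}}$ (equivalently, $1$ is not an eigenvalue of the generator restricted to $L$, and in fact no $\lambda$ is). The key computation is to diagonalize $\mathcal{F}_q(U_t)$ with respect to the tensor decomposition of the Fock space along $H=K\oplus L$: a vector in $\mathcal{F}_q(H)$ expands into components sitting in various tensor products mixing $K$- and $L$-legs, and an eigenvector of $\mathcal{F}_q(U_t)$ must have each homogeneous (and each $K/L$-type) component be an eigenvector. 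Since a component lying in $\mathcal{F}_q(K)^\perp$ must involve at least one leg from $L$, and $U_t''$ on $L$ has no eigenvectors while admitting a nontrivial continuous spectral decomposition, the corresponding component cannot be an eigenvector of the amplified semigroup; thus all eigenvectors lie in $\mathcal{F}_q(K)$.

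For part (2), I would start from $x\in\Gamma_q(H_{\mathbb{R}},U_t)\cap\Gamma_q(H_{\mathbb{R}},U_t)'$ and observe that $x$ commutes with the modular group. Since the modular operator is implemented on the Fock space by the second quantization $\mathcal{F}_q(U_t)$ (more precisely by $\mathcal{F}_q(A^{\mathrm{i}t})$ where $A$ is the analytic generator of the deformed inner product), the vector $x\Omega$ is fixed by the modular automorphism group acting on $x$, which forces $x\Omega$ to be an eigenvector (indeed a fixed point) of the relevant one-parameter group $\mathcal{F}_q(U_t)$. Applying part (1), we conclude $x\Omega\in\mathcal{F}_q(K)$.

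To upgrade from $x\Omega\in\mathcal{F}_q(K)$ to the operator statement $x\in\Gamma_q(K_{\mathbb{R}},U_t|_{K_{\mathbb{R}}})$, I would invoke the second quantization of the orthogonal projection $P:H_{\mathbb{R}}\to K_{\mathbb{R}}$, which commutes with $U_t$ and therefore induces a normal conditional expectation (an $\Omega$-preserving, hence faithful, projection) $E:\Gamma_q(H_{\mathbb{R}},U_t)\to\Gamma_q(K_{\mathbb{R}},U_t|_{K})$ compatible with the vacuum state, precisely as in the tracial case (\cite[Lemma 3.1]{lustpiquard99qfock}). Since $\mathcal{F}_q(P)$ acts as the orthogonal projection onto $\mathcal{F}_q(K)$ at the level of the Fock space and $E(x)\Omega=\mathcal{F}_q(P)(x\Omega)=x\Omega$, faithfulness of the expectation together with the fact that $W$ recovers $x$ from $x\Omega$ gives $E(x)=x$, whence $x\in\Gamma_q(K_{\mathbb{R}},U_t|_{K})$.

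The main obstacle I anticipate is establishing rigorously in part (1) that an eigenvector of $\mathcal{F}_q(U_t)$ cannot have any component outside $\mathcal{F}_q(K)$: one must handle the continuous spectrum of $U_t''$ carefully, since on the level of a single tensor leg an eigenvector of $\mathcal{F}_q(U_t)$ of eigenvalue $\lambda$ would require a leg-vector in $L$ to be an eigenvector of $U_t''$, contradicting ergodicity, but making this precise across all homogeneous components and all mixed $K/L$-tensor types (where eigenvalues of the full tensor could conceivably combine to produce resonances) requires the spectral-theoretic input that the point spectrum of $U_t''$ is empty and that sums of spectral values from the continuous part of $L$ cannot collapse into a genuine eigenvalue of the amplification. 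The cleanest route is to pass to the spectral decomposition of the generator $A$ and observe that the absence of eigenvectors on $L$ propagates to every nontrivial tensor power involving an $L$-leg.
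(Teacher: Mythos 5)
Your argument is essentially the paper's: for (1) the paper likewise reduces to the absence of eigenvectors of $U_{t}|_{H_{i_{1}}}\otimes\cdots\otimes U_{t}|_{H_{i_{n}}}$ on each homogeneous component containing at least one $L$-leg, resolving exactly the ``resonance'' worry you raise by representing $U_{t}$ as a multiplication operator on some $L^{2}(\mu)$ (a non-atomic spectral measure tensored with anything keeps the sum of spectral parameters non-atomic); for (2) it uses that a central element lies in the centralizer of the vacuum state, so $x\Omega$ is fixed by $\mathcal{F}_{q}(U_{t})$ and its component in $\mathcal{F}_{q}(K)^{\bot}$ is an eigenvector, hence zero by (1), with the passage from $x\Omega\in\mathcal{F}_{q}(K)$ to $x\in\Gamma_{q}(K_{\mathbb{R}},U_{t}|_{K})$ done via the second quantization of $P$ just as you describe. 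The only inaccuracy is your opening aside (for $q\neq0$ the algebra is not a free product over $K_{\mathbb{R}}\oplus L_{\mathbb{R}}$), but you immediately discard that route in favour of the direct spectral argument.
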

\begin{proof}
(1) Let $(e_{i})$ be an orthonormal basis in $H_{\mathbb{R}}$. Since
$\mathcal{F}_{q}(P)$ is the orthogonal projection onto $\mathcal{F}_{q}(K)$,
we have
\[
\mathcal{F}_{q}(P)(\mathcal{F}_{q}(K)^{\bot})=0.
\]
Hence
\[
\mathcal{F}_{q}(K)^{\bot}=\overline{\mathrm{span}}\{e_{i_{1}}\otimes\cdots\otimes e_{i_{n}}:n\geq1,\exists1\leq m\leq n,e_{i_{m}}\in L_{\mathbb{R}}\}.
\]
Denote
\[
K_{n}=\overline{\mathrm{span}}\{e_{i_{1}}\otimes\cdots\otimes e_{i_{n}}\in\mathcal{F}_{q}(K)^{\bot}\}=\mathrm{span}\{H_{i_{1}}\otimes\cdots\otimes H_{i_{n}},H_{i}=K\text{ or }L,\exists H_{i}=L\}.
\]
Note that $U_{t}$ is unitarily equivalent to a multiplier map on
some $L^{2}(\mu)$. So by the definition of $K$ and $L$ and the fact that at least one of $H_{i_k}$ is equal to $L$, it is easy to see that
$\mathcal{F}_{q}(U_{t})|_{H_{i_{1}}\otimes\cdots\otimes H_{i_{n}}}=U_{t}|_{H_{i_{1}}}\otimes\cdots\otimes U_{t}|_{H_{i_{n}}}$
admits no eigenvectors. Since each $H_{i_{n}}$ is invariant under
$U_{t}$, $\mathcal{F}_{q}(U_{t})$ admits no eigenvectors in $K_{n}$
either. Then the lemma follows immediately. Indeed, let
\[
\xi=\sum_{n}\xi_{n}\in\mathcal{F}_{q}(K)^{\bot},\quad\xi_{n}\in K_{n},
\]
be an eigenvector. Then we get
\[
\sum_{n}(U_{t}\xi_{n}-\lambda\xi_{n})=0,
\]
for some $\lambda$ and hence $U_{t}\xi_{n}-\lambda\xi_{n}=0$ for
all $n$, which yields a contradiction.

(2) Assume $x\in\Gamma_{q}(H_{\mathbb{R}},U_{t})\cap\Gamma_{q}(H_{\mathbb{R}},U_{t})'$.
Note that $x$ is in the centralizer of the vacuum state $\varphi$.
So we have for all $t\in\mathbb{R}$,
\[
\sigma_{t}(x)\Omega=\Delta^{\mathrm{i}t}x\Delta^{-\mathrm{i}t}\Omega=x\Omega.
\]
Recall the Tomita-Takesaki theory for $\Gamma_{q}(H_{\mathbb{R}},U_{t})$ and the vacuum state.
We see that $x\Omega$ is a fixed point of $\mathcal{F}_{q}(U_{t})$,
and hence $(\mathcal{F}_{q}(P)^{\bot})(x\Omega)$ is an eigenvector
by orthogonal decomposition. So by the above lemma $(\mathcal{F}_{q}(P)^{\bot})(x\Omega)=0$.
That is, $x\Omega\in\mathcal{F}_{q}(K)$ and $x\in\Gamma_{q}(K_{\mathbb{R}},U_{t}|_{K})$.
\end{proof}

\begin{prop}
\label{prop:qaw prop}Let $D_{\mathbb{R}}\subset H_{\mathbb{R}}$
be a real finite-dimensional Hilbert subspace and let $M$ be a diffuse
abelian von Neumann subalgebra of $\Gamma_{q}(H_{\mathbb{R}},U_{t})$
such that $M\Omega\subset\mathcal{F}_{q}(D)$, where $D=D_{\mathbb{R}}+ i D_{\mathbb{R}}$. Assume $x\in\Gamma_{q}(H_{\mathbb{R}},U_{t})\cap M'$.

\emph{(1)} If $x\in C_{q}^{*}(H_{\mathbb{R}},U_{t})$, then $x\Omega\in\mathcal{F}_{q}(D)$;

\emph{(2)} If $M$ is  contained in the centralizer of $\Gamma_{q}(H_{\mathbb{R}},U_{t})$ ,
then $x\Omega\in\mathcal{F}_{q}(D)$.\end{prop}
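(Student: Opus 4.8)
The plan is to mimic the strategy already used in the proof of Theorem~\ref{thm:factor mixed}, replacing the mixed $q$-Gaussian machinery with its $q$-Araki-Woods counterpart and using Lemma~\ref{lem:conv general case} as the key convergence engine. Since $M$ is diffuse and abelian, I would first select a sequence of self-adjoint unitaries $(u_\alpha)\subset M$ with $u_\alpha^2=1$ and $u_\alpha\Omega\to0$ weakly, coming from Rademacher-type functions under the isomorphism $M\cong L^\infty([0,1],dm)$; this is exactly the device from Theorem~\ref{thm:factor mixed}. The goal is to show that $\langle\xi,x\Omega\rangle=0$ for every word $\xi$ orthogonal to $\mathcal{F}_q(D)$, so that $x\Omega\in\mathcal{F}_q(D)$ by density. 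Using $x\in M'$ and $u_\alpha\in M$, I would write, for $\xi$ in the domain of a right Wick product,
\[
\langle\xi,x\Omega\rangle=\varphi\bigl(x^*u_\alpha^2 W_r(\xi)\bigr)=\langle W_r(\xi)u_\alpha\Omega,\,x u_\alpha\Omega\rangle,
\]
and then aim to prove that the right-hand side tends to $0$ whenever $\xi\perp\mathcal{F}_q(D)$.

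To establish that convergence I would reduce, via the right Wick formula of Lemma~\ref{lem:qaw preliminary lem}(1) and the uniform boundedness of the relevant functionals, to showing decay of inner products of the form
\[
\langle r(f_{1})\cdots r(f_{s})r^*(g_{1})\cdots r^*(g_{p})u_\alpha\Omega,\;l(h_{1})\cdots l^*(\cdots)u_\alpha\Omega\rangle\to0,
\]
where at least one of the right-creation vectors, say the annihilation vector with smallest index that first leaves $D$, is orthogonal to $D_{\mathbb{R}}$ (using part~(2) of Lemma~\ref{lem:qaw preliminary lem} to keep orthogonality intact under $I_r$). I would then invoke Lemma~\ref{lem:conv general case} with $a_i$ taken from the right annihilation operators $r^*(\cdot)$ truncated up to that first ``escaping'' index and $b_j$ from the left creation/annihilation family; the hypotheses of that lemma are met because the left/right operators $q$-commute up to commutators with norm $\leq q^{n}$ on the $n$-th level (the standard $q$-deformed relation $l^*(\xi)r(\eta)-r(\eta)l^*(\xi)=q^{\#}\langle\cdot\rangle$-type estimate), the spaces $K_n=D^{\otimes n}$ are finite-dimensional, the right annihilation operators preserve $\mathcal{F}_q(D)$ below the escaping index, and the escaping annihilation operator kills $\mathcal{F}_q(D)$. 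Since $u_\alpha\Omega\to0$ weakly and is bounded, the lemma yields the desired convergence, hence $x\Omega\in\mathcal{F}_q(D)$.

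The two hypotheses (1) and (2) differ only in how one justifies the identity $\langle\xi,x\Omega\rangle=\langle W_r(\xi)u_\alpha\Omega,xu_\alpha\Omega\rangle$ and the boundedness of the auxiliary functionals. In case~(2), where $M$ lies in the centralizer, the state $\varphi$ is a trace on $M$, so the cyclic rearrangement $\varphi(x^*u_\alpha^2 W_r(\xi))=\varphi(u_\alpha x^* u_\alpha W_r(\xi))$ and the KMS/centralizer conditions make the manipulation and the uniform bound transparent, paralleling the tracial argument in Theorem~\ref{thm:factor mixed}. In case~(1), where $\varphi$ need not be tracial but $x\in C_q^*(H_{\mathbb{R}},U_t)$, I would instead exploit that $x$ is a norm-limit of Wick polynomials so that $xu_\alpha\Omega$ stays in a controlled subspace and the pairing can be estimated directly on the dense $*$-algebra, with the $C^*$-norm providing the uniform control that traciality supplied in case~(2).

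The main obstacle I anticipate is precisely the loss of traciality in case~(1): in the non-tracial Araki-Woods setting one cannot freely cyclically permute inside $\varphi$, and the modular operator $\Delta$ intertwines left and right structures nontrivially, so the clean identity \eqref{eq:mixed inner prod zero} from the mixed case must be replaced by an argument that keeps track of $\Delta$-twists. Restricting to $x\in C_q^*(H_{\mathbb{R}},U_t)$ is what makes this tractable, since approximation by Wick words lets one reduce everything to finite tensor levels where Lemma~\ref{lem:conv general case} applies; verifying the requisite uniform boundedness of the functionals $y^*\Omega\mapsto\langle W_r(\xi)u_\alpha\Omega,yu_\alpha\Omega\rangle$ in this twisted, non-tracial situation is the delicate point that will require the most care.
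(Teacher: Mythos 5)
Your proposal is correct and follows essentially the same route as the paper: Rademacher-type unitaries from the diffuse abelian $M$, the identity $\langle\xi,x\Omega\rangle=\langle W_r(\xi)u_\alpha\Omega,xu_\alpha\Omega\rangle$ via $x\in M'$, reduction to Wick words (with the $C^*$-norm giving uniform boundedness of the functionals in case (1), and the centralizer condition making $z\Omega\mapsto zu_\alpha\Omega$ isometric in case (2)), and finally Lemma~\ref{lem:conv general case} applied to the right annihilators up to the first index escaping $D$ against the left creation/annihilation family. The only minor inaccuracy is your worry about cyclic permutation under $\varphi$: the key identity uses only $xu_\alpha=u_\alpha x$, not traciality, and the paper isolates the genuine non-tracial difficulty exactly where you place it, namely in the uniform boundedness of the approximating functionals.
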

\begin{proof}
The proof is similar to that of Theorem \ref{thm:factor mixed}, so we
only present a sketch. Since $M$ is diffuse and $M\Omega\subset\mathcal{F}_{q}(D)$,
we may find a sequence of unitaries $(u_{\alpha})_{\alpha\in \mathbb{N}}\subset M$
such that
\[
u_{\alpha}=u_{\alpha}^{*},\quad u_{\alpha}^{2}=1,\quad u_{\alpha}\Omega\to0\text{ weakly in }\mathcal{F}_{q}(D).
\]
We may show that for any vector $\xi\in H^{\otimes n}$ with $n\geq1$
which is orthogonal to $\mathcal{F}_{q}(D)$, and for $w\in\Gamma_{q}(H_{\mathbb{R}},U_{t})$,
if one of the following conditions is satisfied:

(a) $w\in C_{q}^{*}(H_{\mathbb{R}},U_{t})$;

(b) the operator $z\Omega\mapsto zu_{\alpha}\Omega$ is uniformly
bounded on $\mathcal{F}_{q}(H)$; \\
then
\begin{equation}
\varphi(u_{\alpha}w^{*}u_{\alpha}W(\xi))=\langle W_{r}(\xi)u_{\alpha}\Omega,wu_{\alpha}\Omega\rangle\to0.\label{eq:conv qaw}
\end{equation}
Indeed, we note that the anti-linear functional $z\mapsto\varphi(u_{\alpha}z^{*}u_{\alpha}W(\xi))$
is uniformly bounded on $C_{q}^{*}(H_{\mathbb{R}},U_{t})$ with respect
to $\alpha$, and if (b) is satisfied, the anti-linear functional $z\Omega\mapsto\varphi(u_{\alpha}z^{*}u_{\alpha}W(\xi))$
is uniformly bounded on $\mathcal{F}_{q}(H)$ with respect to $\alpha$.
So if any one of (a) and (b) is satisfied, we may find a sequence
of vectors $(\eta_{k})_{k=1}^{\infty}$ in the algebraic span of $\{H^{\otimes n}:n\geq1\}$
such that we have the convergence
\[
\varphi(u_{\alpha}W(\eta_{k})^{*}u_{\alpha}W(\xi))\to\varphi(u_{\alpha}w^{*}u_{\alpha}W(\xi)),\quad k\to\infty
\]
which is uniform with respect to $\alpha$. This means that in order
to see \eqref{eq:conv qaw} under the condition (a) or (b), it suffices
to assume that $w$ belongs to the the algebraic span of $\{H^{\otimes n}:n\geq1\}$.
On the other hand, recall that $\xi\bot\mathcal{F}_{q}(D)$, which
means that $\xi$ is the combination of words of the form
\[
e_{m_{1}}\otimes\cdots\otimes e_{m_{n}},\quad e_{m_{1}},\ldots,e_{m_{n}}\in H\cup D^{\bot},\exists1\leq k\leq n,\ e_{m_{k}}\in D^{\bot}.
\]
Thus by the Wick formula in Lemma \ref{Wickformula}, it suffices
to prove the convergence
\[
\langle r(e_{i_{1}})\cdots r(e_{i_{m}})r^{*}(I_{r}e_{i_{m+1}})\cdots r^{*}(I_{r}e_{i_{n}})u_{\alpha}\Omega,l(e_{j_{1}})\cdots l(e_{j_{s}})l^{*}(Ie_{j_{s+1}})\cdots l^{*}(Ie_{j_{p}})u_{\alpha}\Omega\rangle\to0,
\]
where there is $1\leq k\leq n$ such that $e_{i_{k}}\in D^{\bot}$,
$e_{i_{k'}}\in H$ for $1\leq k<k'$. By Lemma \ref{lem:qaw preliminary lem},
$I_{r}e_{i_{k}}\in D^{\bot}$ holds as well. Consequently, if $k\geq m+1$,
then $r^{*}(I_{r}e_{i_{k}})\cdots r^{*}(I_{r}e_{i_{n}})u_{\alpha}\Omega=0$
and the above convergence is trivial. Hence we assume $k\leq m$.
Recall that
\[
l^{*}(f)r^{*}(g)-r^{*}(g)l^{*}(f)=0,\quad l(f)r^{*}(g)-r^{*}(g)l(f)=\langle f,g\rangle q^{k}\left(\oplus_{k\geq0}\mathrm{id}_{H^{\otimes k}}\right),\quad f,g\in H.
\]
Now applying Lemma \ref{lem:conv general case} as in Theorem \ref{thm:factor mixed},
we obtain the desired convergence \eqref{eq:conv qaw}.

Now the conclusion of the theorem is immediate. Take $x\in\Gamma_{q}(H_{\mathbb{R}},U_{t})\cap M'$.
We have for all $\alpha\geq1$ and every $\xi\in H^{\otimes n}$ with
$n\geq1$ which is orthogonal to $\mathcal{F}_{q}(D)$,
\[
\langle\xi,x\Omega\rangle=\varphi(x^{*}W(\xi))=\varphi(x^{*}u_{\alpha}^{2}W(\xi))=\varphi(u_{\alpha}x^{*}u_{\alpha}W(\xi))=\langle W_{r}(\xi)u_{\alpha}\Omega,xu_{\alpha}\Omega\rangle.
\]
If now the assumption of (1) holds, then by (a) and \eqref{eq:conv qaw} we see that
\[
\langle\xi,x\Omega\rangle=\langle W_{r}(\xi)u_{\alpha}\Omega,xu_{\alpha}\Omega\rangle\to0.
\]
Similarly if the assumption of (2) holds, then the $u_{\alpha}$'s belong to the centralizer of
$\Gamma_{q}(H_{\mathbb{R}},U_{t})$, and hence
\[
\|zu_{\alpha}\Omega\|^{2}=\varphi(u_{\alpha}z^{*}zu_{\alpha})=\varphi(z^{*}zu_{\alpha}^{2})=\varphi(z^{*}z)=\|z\Omega\|^{2},
\]
so (b) is satisfied. By \eqref{eq:conv qaw} this yields that
\[
\langle\xi,x\Omega\rangle=\langle W_{r}(\xi)u_{\alpha}\Omega,xu_{\alpha}\Omega\rangle\to0.
\]
So $\langle\xi,x\Omega\rangle=0$ for all words $\xi\in\mathcal{F}_{q}(D)^{\bot}$
and hence $x\Omega\in\mathcal{F}_{q}(D)$.\end{proof}

We are ready to state the second main result of this article.

\begin{thm}
Assume $\dim H_{\mathbb{R}}\geq2$.

\emph{(1)} If there exists $\xi_{0}\in H_{\mathbb{R}}$ such that
$U_{t}\xi_{0}=\xi_{0}$, then $\Gamma_{q}(H_{\mathbb{R}},U_{t})$
is a factor;

\emph{(2)} Let $H_{\mathbb{R}}^{(1)},H_{\mathbb{R}}^{(2)}$ be two
finite-dimensional Hilbert subspaces of \textup{$H_{\mathbb{R}}$
}\textup{\emph{which are invariant under $U_{t}$ and are orthogonal
with respect to the real inner product of }}\textup{$H_{\mathbb{R}}$}\textup{\emph{.
Assume that for $k=1,2$ the centralizer of $\Gamma_{q}(H_{\mathbb{R}}^{(k)},U_{t}|_{H_{\mathbb{R}}^{(k)}})$
contains a diffuse element. Then}}\textup{ }$\Gamma_{q}(H_{\mathbb{R}},U_{t})$
is a factor;

\emph{(3)} $\Gamma_{q}(H_{\mathbb{R}},U_{t})'\cap C_{q}^{*}(H_{\mathbb{R}},U_{t})=\mathbb{C}1$.\end{thm}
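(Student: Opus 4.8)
The plan is to derive all three statements from Proposition~\ref{prop:qaw prop} by the same mechanism used for the mixed case in Theorem~\ref{thm:factor mixed}: exhibit, in each situation, enough diffuse abelian subalgebras $M\subset\Gamma_q(H_{\mathbb{R}},U_t)$ meeting hypothesis (a) or (b) of that proposition so that the constraints $x\Omega\in\mathcal{F}_q(D)$ coming from different choices of $D$ can be intersected down to $\mathbb{C}\Omega$. Since $\Omega$ is separating for $\Gamma_q(H_{\mathbb{R}},U_t)$ and $x\Omega\in\mathbb{C}\Omega$ forces $x\in\mathbb{C}1$ for $x\in\Gamma_q(H_{\mathbb{R}},U_t)$, this yields triviality of the relevant centre. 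The only genuinely new ingredient beyond Proposition~\ref{prop:qaw prop} is how to produce the \emph{second} constraint.

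I would treat parts (3) and (2) first, as they follow the cleanest pattern. For (3), fix a unit vector $e\in H_{\mathbb{R}}$ and put $M_e=\{s(e)\}''$ with $s(e)=l(e)+l^*(e)$; this is abelian with $M_e\Omega\subseteq\mathcal{F}_q(\mathbb{C}e)$, and it is diffuse because the single field operator $s(e)$ has an absolutely continuous distribution with respect to $\varphi$. Given $x\in\Gamma_q(H_{\mathbb{R}},U_t)'\cap C_q^*(H_{\mathbb{R}},U_t)$, since $x\in C_q^*(H_{\mathbb{R}},U_t)$ I would apply Proposition~\ref{prop:qaw prop}(1) with $D=\mathbb{C}e$ to get $x\Omega\in\mathcal{F}_q(\mathbb{C}e)$ for \emph{every} unit $e$; choosing two linearly independent $e_1,e_2$ (possible as $\dim H_{\mathbb{R}}\geq2$) and observing that in each positive degree the lines $\mathbb{C}e_1^{\otimes n}$ and $\mathbb{C}e_2^{\otimes n}$ meet only in $0$ gives $\mathcal{F}_q(\mathbb{C}e_1)\cap\mathcal{F}_q(\mathbb{C}e_2)=\mathbb{C}\Omega$. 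For (2), for $k=1,2$ let $a_k$ be a diffuse element in the centralizer of $\Gamma_q(H_{\mathbb{R}}^{(k)},U_t|_{H_{\mathbb{R}}^{(k)}})$. As $P_k\colon H_{\mathbb{R}}\to H_{\mathbb{R}}^{(k)}$ commutes with $U_t$, its second quantization $\mathcal{F}_q(P_k)$ induces a normal, vacuum-preserving (hence modular-equivariant) embedding, so $M_k:=\{a_k\}''$ is diffuse abelian, lies in the centralizer of the whole algebra, and satisfies $M_k\Omega\subseteq\mathcal{F}_q(H^{(k)})$ with $H^{(k)}=H_{\mathbb{R}}^{(k)}+\mathrm{i}H_{\mathbb{R}}^{(k)}$ finite dimensional. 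For central $x$, condition (b) holds for each $M_k$, and Proposition~\ref{prop:qaw prop}(2) gives $x\Omega\in\mathcal{F}_q(H^{(1)})\cap\mathcal{F}_q(H^{(2)})$; since $H_{\mathbb{R}}^{(1)}\perp H_{\mathbb{R}}^{(2)}$ are $U_t$-invariant, $H^{(1)}\perp H^{(2)}$ for $\langle\cdot,\cdot\rangle_U$, so words from $H^{(1)}$ are orthogonal in $\mathcal{F}_q(H)$ to words from $H^{(2)}$ and the intersection is again $\mathbb{C}\Omega$.

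Part (1) needs more care, because an invariant eigenvector supplies only one direction. I would take the fixed unit vector $\xi_0$ (so $A\xi_0=\xi_0$) and set $M_0=\{s(\xi_0)\}''$, which is diffuse abelian and, as $\xi_0$ is modular-invariant, sits in the centralizer; Proposition~\ref{prop:qaw prop}(2) then gives $x\Omega\in\mathcal{F}_q(\mathbb{C}\xi_0)$ for central $x$, say $x\Omega=\sum_{k\geq0}c_k\xi_0^{\otimes k}$. To kill the terms with $k\geq1$ I would use centrality directly: pick $\eta\in H_{\mathbb{R}}$ with $\eta\perp\xi_0$, note $\langle\eta,\xi_0\rangle_U=0$ since $\xi_0$ lies in the $U_t$-fixed space (on which $\langle\cdot,\cdot\rangle_U$ agrees with the real inner product and which is $\langle\cdot,\cdot\rangle_U$-orthogonal to its invariant complement), and exploit $xs(\eta)=s(\eta)x$. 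Evaluating both sides at $\Omega$ and computing through the Wick formula of Lemma~\ref{lem:qaw preliminary lem} — each annihilation $l^*(\xi_0)$ kills $\eta$ and $l^*(\eta)$ kills every $\xi_0^{\otimes k}$ — gives $x\eta=\sum_k c_k\,\xi_0^{\otimes k}\otimes\eta$ and $s(\eta)x\Omega=\sum_k c_k\,\eta\otimes\xi_0^{\otimes k}$. Because the words $\xi_0^{\otimes k}\otimes\eta$ and $\eta\otimes\xi_0^{\otimes k}$ are linearly independent in $\mathcal{F}_q(H)$ for $k\geq1$, matching degrees forces $c_k=0$ for $k\geq1$, hence $x\Omega\in\mathbb{C}\Omega$.

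The hard part is precisely this last argument in (1): since $U_t$ may admit no second invariant direction, the clean ``intersect two finite-dimensional Fock spaces'' step of (2) and (3) is unavailable, and one must instead extract scalarity from the commutation with a transverse field operator, which relies on the explicit Wick calculus and on the orthogonality $\langle\eta,\xi_0\rangle_U=0$. The remaining technical points I would need to pin down are the diffuseness of $\{s(e)\}''$ for arbitrary nonzero $e\in H_{\mathbb{R}}$ in part (3), and the claim in part (2) that the embedding $\mathcal{F}_q(P_k)$ carries the centralizer of the subalgebra into the centralizer of the full algebra; both are routine but should be stated explicitly.
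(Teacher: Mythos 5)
Your treatment of parts (2) and (3) follows the paper's proof essentially verbatim: both reduce to Proposition \ref{prop:qaw prop} (parts (2) and (1) respectively) applied to diffuse abelian subalgebras whose associated finite-dimensional Fock subspaces intersect in $\mathbb{C}\Omega$, and the points you flag as routine (diffuseness of $\{l(e)+l^*(e)\}''$ for $e\in H_{\mathbb{R}}$, compatibility of second quantization with centralizers) are treated at the same level of detail in the paper. The problem is in part (1), at the step
\[
x\eta=\sum_{k\geq0}c_k\,\xi_0^{\otimes k}\otimes\eta .
\]
You obtain this by expanding $x$ as $\sum_k c_k W(\xi_0^{\otimes k})$ and applying the Wick formula termwise to the vector $\eta$. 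But all you know is that $x\Omega=\sum_k c_k\xi_0^{\otimes k}$ converges in $\mathcal{F}_q(H)$; the partial Wick sums $\sum_{k\leq N}c_kW(\xi_0^{\otimes k})$ need not converge to $x$ in any operator topology (convergence of such Wick ``Fourier series'' is a genuine difficulty already in the tracial case), so the expansion cannot be evaluated at a vector other than $\Omega$. Your other identity, $s(\eta)x\Omega=\sum_k c_k\,\eta\otimes\xi_0^{\otimes k}$, is fine, since there a single bounded operator is applied to the convergent vector $x\Omega$; but $x\eta=xs(\eta)\Omega=s(\eta)x\Omega$ then reproduces that same expression and leaves you with nothing to compare it against.

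The paper gets the missing second expression for $x\eta$ from the commutant instead of from a Wick expansion of $x$: it chooses $\eta\in H_{\mathbb{R}}'$ (not $H_{\mathbb{R}}$) orthogonal to $\xi_0$, so that $W_r(\eta)=r(\eta)+r^*(I_r\eta)$ is a single bounded self-adjoint element of $\Gamma_q(H_{\mathbb{R}},U_t)'$ with $W_r(\eta)\Omega=\eta$; then $x\eta=xW_r(\eta)\Omega=W_r(\eta)x\Omega=\xi\otimes\eta$, because the annihilation part dies using $\xi\in\mathcal{F}_q(\mathbb{C}\xi_0)$ and $I_r\eta\perp\xi_0$, and computing $\|\xi\otimes\eta\|^2$ once directly and once via the self-adjointness of $W_r(\eta)$ forces $\xi\in\mathbb{C}\Omega$. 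Your route could be repaired without the commutant, but only with extra work: first upgrade $x\Omega\in\mathcal{F}_q(\mathbb{C}\xi_0)$ to $x\in\{s(\xi_0)\}''$ via the second quantization of the projection onto $\mathbb{R}\xi_0$ (as in Theorem \ref{thm:factor mixed}), and then check that $\xi_0^{\otimes k}\mapsto\xi_0^{\otimes k}\otimes\eta$ extends to a bounded operator commuting with $s(\xi_0)$ (this uses $\langle\eta,\xi_0\rangle_U=0$), hence with $x$; only then is your formula for $x\eta$ legitimate. As written, the step is a gap.
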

\begin{proof}
(1) Since $\dim H_{\mathbb{R}}\geq2$ and $U_{t}\xi_{0}=\xi_{0}$,
the subspace $(\mathbb{C}\xi_{0})^{\bot}\subset H$ is invariant
under $U_{t}$, and we may find a vector $\eta\in(\mathbb{C}\xi_{0})^{\bot}$
such that $\eta\in H_{\mathbb{R}}'$, $\eta\bot\xi_{0}$. Note that
in this case $W_{r}(\eta)=W_{r}(\eta)^{*}$ and $I\eta\bot\xi_{0}$.
Take $x\in\Gamma_{q}(H_{\mathbb{R}},U_{t})'\cap\Gamma_{q}(H_{\mathbb{R}},U_{t})$
and denote $\xi=x\Omega$. Note that $W(\xi_{0})$ belongs to the centralizer
of $\Gamma_{q}(H_{\mathbb{R}},U_{t})$ by the assumption $U_{t}\xi_{0}=\xi_{0}$,
and that the spectral measure of $W(\xi_{0})$ is $q$-semicircular
(\cite[Remarks p.298-299]{nou06qawqwep}) and hence $W(\xi_{0})$
generates a diffuse abelian von Neumann subalgebra. So by Proposition
\ref{prop:qaw prop}(2), we have
\[
\xi\in\mathcal{F}_{q}(\mathbb{C}\xi_{0}),\quad\eta\bot\xi,I\eta\bot\xi.
\]
Then we see that
\begin{align*}
W(\xi)\eta & =xW(\eta)\Omega=W(\eta)x\Omega=W(\eta)\xi\\
 & =l(\eta)\xi+l^{*}(I\eta)\xi=\eta\otimes\xi.
\end{align*}
As a result, writing
\[
\lambda=\langle\xi,\Omega\rangle,\quad\zeta=\xi-\lambda\Omega,
\]
we have
\begin{align*}
\|\eta\otimes\xi\|^{2} & =\langle\eta\otimes\xi,W(\xi)\eta\rangle=\langle\eta\otimes\xi,W_{r}(\eta)\xi\rangle=\langle W_{r}(\eta)(\eta\otimes\xi),\xi\rangle\\
 & =\lambda\langle W_{r}(\eta)\eta,\xi\rangle+\langle W_{r}(\eta)(\eta\otimes\zeta),\xi\rangle\\
 & =\lambda\langle\|\eta\|^{2}\Omega,\xi\rangle+\lambda\langle\eta\otimes\eta,\xi\rangle+\langle\eta\otimes\zeta\otimes\eta,\xi\rangle\\
 & =|\lambda|^{2}\|\eta\|^{2}
\end{align*}
where we have used the relation $\eta\bot\xi_0$ in the last equality.
However
\[
\|\eta\otimes\xi\|^{2}=\|\eta\otimes(\lambda\Omega+\zeta)\|^{2}=|\lambda|^{2}\|\eta\|^{2}+\|\eta\otimes\zeta\|^{2}.
\]
Thus the above two equalities yield that $\eta\otimes\zeta=0$. Therefore
$\zeta=0$ and $x\Omega=\xi=\lambda\Omega$. This proves that
\[
\Gamma_{q}(H_{\mathbb{R}},U_{t})'\cap\Gamma_{q}(H_{\mathbb{R}},U_{t})=\mathbb{C}1.
\]

(2) This assertion follows directly from Proposition \ref{prop:qaw prop}(2)
since according to that result any $x\in\Gamma_{q}(H_{\mathbb{R}},U_{t})'\cap\Gamma_{q}(H_{\mathbb{R}},U_{t})$
should satisfy
\[
x\Omega\in\mathcal{F}_{q}(H^{(1)})\cap\mathcal{F}_{q}(H^{(2)})(=\mathbb{C}\Omega).
\]

(3) Since $\dim H_{\mathbb{R}}\geq2$, we may find two vectors $e_{1},e_{2}\in H_{\mathbb{R}}$
which are orthogonal with respect to the real inner product of $H_{\mathbb{R}}$.
Then $W(e_{1})$ and $W(e_{2})$ are self-adjoint diffuse elements
as discussed before, and $\mathcal{F}_{q}(\mathbb{C}e_{1})\cap\mathcal{F}_{q}(\mathbb{C}e_{2})=\mathbb{C}\Omega$.
Then according to Proposition \ref{prop:qaw prop}(1), any $x\in\Gamma_{q}(H_{\mathbb{R}},U_{t})'\cap C_{q}^{*}(H_{\mathbb{R}},U_{t})$
should satisfy
\[
x\Omega\in\mathcal{F}_{q}(\mathbb{C}e_{1})\cap\mathcal{F}_{q}(\mathbb{C}e_{2})(=\mathbb{C}\Omega).
\]
Therefore the assertion is proved.
\end{proof}
\subsection*{Acknowledgment}
The authors would like to thank \'Eric Ricard and Mateusz Wasilewski for helpful discussions, and the anonymous referee for careful reading of our manuscript. The authors were partially supported by the NCN (National Centre of Science) grant
2014/14/E/ST1/00525.

\end{document}